\title{Maximum Waring ranks of monomials}
\author{Erik Holmes}
\address{Erik Holmes \\
Department of Mathematics \\
1910 University Drive \\
Boise State University \\
Boise, ID 83725-1555 \\
USA}
\email{erikholmes@u.boisestate.edu}
\author{Paul Plummer}
\address{Paul Plummer \\
Department of Mathematics \\
1910 University Drive \\
Boise State University \\
Boise, ID 83725-1555 \\
USA}
\email{paulplummer@u.boisestate.edu}
\author{Jeremy Siegert}
\address{Jeremy Siegert \\
Department of Mathematics \\
1910 University Drive \\
Boise State University \\
Boise, ID 83725-1555 \\
USA}
\email{jeremysiegert@u.boisestate.edu}
\author{Zach Teitler}
\address{Zach Teitler \\
Department of Mathematics \\
1910 University Drive \\
Boise State University \\
Boise, ID 83725-1555 \\
USA}
\email{zteitler@boisestate.edu}
\date{\today}
\keywords{Waring rank, Waring problem for homogeneous polynomials, maximum Waring rank, upper bounds for Waring rank}
\subjclass[2010]{13F20}
\newtheorem{theorem}{Theorem}
\newtheorem{proposition}[theorem]{Proposition}
\newtheorem{lemma}[theorem]{Lemma}
\theoremstyle{definition}
\newtheorem{example}[theorem]{Example}
\newcommand{\rgen}{r_{\mathrm{gen}}}
\newcommand{\rmax}{r_{\mathrm{max}}}
\newcommand{\defining}[1]{\textbf{#1}}
\begin{document}

\begin{abstract}
We show that monomials and sums of pairwise coprime monomials
in four or more variables have Waring rank less than the generic rank,
with a short list of exceptions.
We asymptotically compare their ranks with the generic rank.
\end{abstract}

\maketitle

\section{Introduction}

Let $F(x_1,\dotsc,x_n)$ be a polynomial in the variables $x_1,\dotsc,x_n$ with complex coefficients.
We assume $F$ is homogeneous of degree $d$.
The \defining{Waring rank} of $F$, denoted $r(F)$, is the least number of terms needed to write $F$ as a linear combination
of $d$th powers of linear polynomials, $F = c_1 \ell_1^d + \dotsb + c_r \ell_r^d$.
For example, $F(x,y) = xy$ can be written as
\[
  xy = \frac{1}{4}(x+y)^2 - \frac{1}{4}(x-y)^2
\]
which shows $r(xy) \leq 2$.
On the other hand we must have $r(xy) > 1$ because if $xy = c_1 \ell_1^2$ then $xy$ would be a perfect square,
which it is not.
Therefore $r(xy) = 2$.
Similarly,
\[
  xyz = \frac{1}{24} \Big( (x+y+z)^3 - (x+y-z)^3 - (x-y+z)^3 + (x-y-z)^3 \Big)
\]
which shows $r(xyz) \leq 4$.
It turns out that $r(xyz) = 4$, but this is not obvious.
See for example \cite{Landsberg:2009yq,MR2842085} for proofs.

Waring ranks of homogeneous forms have been studied since the 19th century
by Sylvester and others.
For modern introductions see for example \cite{MR1735271,MR2865915,Reznick:2013uq}.
For numerous applications in engineering, sciences, and other areas of mathematics,
see for example \cite{comonmour96,MR2865915}.

It is surprisingly difficult to determine the Waring rank $r(F)$ for an arbitrary homogeneous polynomial
of degree $d$ in $n$ variables, henceforth called a \defining{$d$-form}.
Some cases are known.
For example, in $n=2$ variables, $r(F)$ can be determined by Sylvester's results in \cite{Sylvester:1851kx,Sylvester:1851wd};
for more recent treatments see for example \cite{MR859177,MR2754189,Reznick:2013uq}.
For another example, in degree $d=2$, $F$ is a quadratic form, which can be represented by a symmetric matrix;
then the Waring rank $r(F)$ is equal to the rank of this matrix.
One way to think of Waring rank is as a higher-degree generalization of matrix rank (for symmetric matrices).

It is also difficult to determine the maximum Waring rank occuring for $d$-forms in $n$ variables.
Here is one simple upper bound:
the space of $d$-forms in $n$ variables is spanned by the powers of linear forms,
so choosing a basis consisting of powers of linear forms shows that
the Waring rank $r(F)$ is at most the dimension of the space, $r(F) \leq \binom{d+n-1}{n-1}$.

This was improved to $r(F) \leq \binom{d+n-2}{n-1}$ in \cite{MR2383331},
then $r(F) \leq \binom{d+n-2}{n-1} - \binom{d+n-6}{n-3}$ in \cite{Jelisiejew:2013fk},
and then $r(F) \leq \binom{d+n-2}{n-1} - \binom{d+n-6}{n-3} - \binom{d+n-7}{n-3}$ in \cite[Prop.~3.9]{Ballico:2013sf}.
See also \cite{Blekherman:2014eq}.
But the actual maximum rank is known in only a few cases.
Binary ($n=2$) forms of degree $d$ have rank at most $d$, with $r(xy^{d-1})=d$.
For quadratic forms ($d=2$) the maximum rank is $n$.
For $(n,d)=(3,3)$ Jelisiejew's and Ballico--De Paris's upper bounds are $5$
and it is known that there are forms of rank $5$ \cite{MR1506892,comonmour96,Landsberg:2009yq}.
Finally, for $(n,d)=(3,4)$ Jelisiejew's upper bound is $9$,
Ballico--De Paris's upper bound is $8$,
and it is known that there are forms of rank $7$.
It has been shown that in this case the maximum rank is actually $7$ \cite{Kleppe:1999fk,Paris:2013fk}.
So the search for a sharp upper bound continues.


For each value of $n$ and $d$
there is value of rank that holds for all forms in a dense Zariski open subset of the space of forms,
called the \defining{generic} Waring rank of a form in $n$ variables of degree $d$.
We denote it by $\rgen(n,d)$.
By the Alexander--Hirschowitz theorem
\cite{MR1311347} 
it is given by
\begin{equation}\label{eq: generic rank}
  \rgen(n,d) = \left\lceil \frac{1}{n} \binom{d+n-1}{n-1} \right\rceil,
\end{equation}
except if $(n,d) = (n,2), (3,4), (4,4), (5,3), (5,4)$.
In the exceptional cases $\rgen(n,2)=n$ (instead of $\lceil (n+1)/2 \rceil$),
$\rgen(3,4)=6$ (instead of $5$),
$\rgen(4,4)=10$ (instead of $9$),
$\rgen(5,3)=8$ (instead of $7$),
and $\rgen(5,4)=15$ (instead of $14$).

Clearly the maximum value of rank is at least the generic value, and at most the upper bound of Jelisiejew
or Ballico--De Paris.
The gap between the generic rank and these upper bounds is not too large:
\[
  \frac{\binom{d+n-2}{n-1} - \binom{d+n-6}{n-3} - \binom{d+n-7}{n-3}}{\left\lceil \frac{1}{n} \binom{d+n-1}{n-1} \right\rceil}
  \lessapprox
  \frac{\binom{d+n-2}{n-1}}{\frac{1}{n}\binom{d+n-1}{n-1}}
  =
  \frac{dn}{d+n-1} .
\]
This has been improved in \cite{Blekherman:2014eq}, where it is shown that the maximum value of rank
is at most twice the generic value.
We can narrow this gap further either by finding new upper bounds
or by finding forms with greater than generic rank.

It turns out that very few examples are known of forms with greater than generic rank,
with $n \geq 3$ variables.
(Plenty are known for $n=2$.)
In fact, it seems that until recently only finitely many such examples were known:
just some cubics and quartics ($d=3,4$) in $n=3$ variables.

Recently, however, an infinite family of forms was discovered to have greater than generic rank.
This family was found by
Carlini, Catalisano, and Geramita in their solution of the Waring rank problem for monomials \cite{Carlini20125}.
Let $M = x_1^{a_1} \dotsm x_n^{a_n}$ with $0 < a_1 \leq \dotsb \leq a_n$.
They showed that the Waring rank of $M$ is
\[
  r(M) = (a_2+1)\dotsm(a_n+1) .
\]
The rank of $M$ is maximized when $a_1 = 1$ and the remaining exponents $a_2,\dotsc,a_n$
are as close as possible to being equal.
Explicitly, let $d = a_1 + \dotsb + a_n = \deg M$
and write $d-1 = q(n-1) + s$ with $0 \leq s < n-1$.
Then the maximum rank monomial in $n$ variables of degree $d$ is
\[
  x_1 x_2^q \dotsm x_{n-s}^q x_{n-s+1}^{q+1} \dotsm x_n^{q+1} .
\]
This has rank
\[
  (1+q)^{n-s-1} (2+q)^s
    = \left(1 + \left\lfloor \frac{d-1}{n-1} \right\rfloor\right)^{n-s-1} \left(1 + \left\lceil \frac{d-1}{n-1} \right\rceil\right)^{s}
  \approx
  \left(1 + \frac{d-1}{n-1}\right)^{n-1} .
\]
So for fixed $n$, asymptotically in $d$ the maximum rank of monomials is $d^{n-1}/(n-1)^{n-1}$,
while by \eqref{eq: generic rank} asymptotically the generic rank is $d^{n-1}/n!$.
For $n=3$, therefore, the maximum rank of monomials is asymptotically $3/2$ the generic rank.
In particular there are infinitely many monomials in three variables with greater than generic rank,
and in fact it is easy to see that they occur in every degree $d \geq 5$.
On the other hand for $n \geq 4$ we have $(n-1)^{n-1} > n!$,
so for $d \gg 0$, monomials have less than generic rank.
This shows that for each $n \geq 4$, there are (at most) finitely many monomials
with higher than generic rank.
All of this was observed in \cite{Carlini20125}.

We show here that in fact, in four or more variables there are \emph{absolutely no} monomials with higher than generic rank.
Then we consider sums of pairwise coprime monomials,
whose ranks were also determined in \cite{Carlini20125}.
We show that all such sums have less than generic rank, with exactly three exceptions.
Next we asymptotically compare the maximum ranks of monomials and sums of pairwise coprime monomials with the generic rank.
Finally we briefly discuss non-monomial examples.

\section{Ranks of monomials in four or more variables}

\emph{All} monomials in four or more variables have less than generic rank.

\begin{theorem}\label{thm: monomial}
Let $M$ be a monomial in $n \geq 4$ variables
and let $d = \deg M > 1$.
Then $r(M) < \rgen(n,d)$.
\end{theorem}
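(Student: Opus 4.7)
The plan is to reduce the theorem to the monomial of maximum rank and then establish a purely algebraic comparison. By the Carlini--Catalisano--Geramita formula quoted above, the maximum-rank degree-$d$ monomial in $n$ variables is
\[
  M_{\max} = x_1 x_2^q \dotsm x_{n-s}^q x_{n-s+1}^{q+1} \dotsm x_n^{q+1},
\]
where $d - 1 = q(n-1) + s$ with $0 \leq s < n-1$, and $r(M_{\max}) = (q+1)^{n-1-s}(q+2)^s$. Every degree-$d$ monomial has rank at most $r(M_{\max})$, so the theorem reduces to proving $(q+1)^{n-1-s}(q+2)^s < \rgen(n,d)$ for all $n \geq 4$ and $d \geq 2$.

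Since $\rgen(n,d) \geq \lceil \binom{d+n-1}{n-1}/n \rceil$ in every case---with equality outside the Alexander--Hirschowitz exceptional set and strict inequality inside it---it suffices to establish the strict algebraic inequality
\[
  n \cdot (q+1)^{n-1-s}(q+2)^s < \binom{d+n-1}{n-1}.
\]
For large $d$ this is transparent from the asymptotics recorded in the introduction: $(q+1)^{n-1-s}(q+2)^s \sim d^{n-1}/(n-1)^{n-1}$ while $\binom{d+n-1}{n-1}/n \sim d^{n-1}/n!$, and $(n-1)^{n-1} > n!$ for every $n \geq 4$, with the gap widening in $n$. To turn the asymptotic statement into a proof valid for every $d \geq 2$, I would expand $\binom{d+n-1}{n-1}$ as a product of $n-1$ consecutive factors and compare term-by-term with $n \cdot (q+1)^{n-1-s}(q+2)^s$, either via a direct AM--GM style bound or by induction on $q$ in which the base cases $q \in \{0,1\}$ are checked by hand.

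The main obstacle I expect is tightness at small $n$ and small $d$, where the ratio of the two sides of the algebraic inequality is closest to one. A convenient feature is that the would-be extremal cases, namely $(n,2)$ for every $n$ together with $(4,4), (5,3), (5,4)$, are precisely the Alexander--Hirschowitz exceptions, where $\rgen$ is strictly larger than $\binom{d+n-1}{n-1}/n$. In each of these the conclusion is immediate from the explicit values recorded above; for instance $r(M_{\max}) = 8 < 10 = \rgen(4,4)$, and the cases $(n,2)$ are easy because $r(x_1 x_2) = 2 < n$. After peeling these off, a single clean algebraic inequality remains to be verified for all other $(n,d)$ with $n \geq 4$ and $d \geq 2$, and the widening asymptotic gap suggests that induction on $q$ should close it.
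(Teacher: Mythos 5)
Your high-level strategy is the same as the paper's: peel off the Alexander--Hirschowitz exceptional cases $(n,2)$, $(4,4)$, $(5,3)$, $(5,4)$ by direct inspection, and for all other $(n,d)$ reduce to the purely algebraic inequality $n\cdot \rmax(n,d) < \binom{d+n-1}{n-1}$, which suffices because $\rgen(n,d) \geq \lceil \frac{1}{n}\binom{d+n-1}{n-1}\rceil$ always and $\rmax(n,d)$ is an integer. That reduction is sound. The problem is that the one step carrying all the content --- the algebraic inequality itself --- is left as a plan rather than a proof, and the plan as stated would not go through in its most natural form. If you bound $\rmax(n,d) \leq \left(\frac{d+n-2}{n-1}\right)^{n-1}$ by AM--GM (as the paper does) and then try to prove
\[
  \left(\frac{d+n-2}{d+n-1}\right)\left(\frac{d+n-2}{d+n-2}\right)\left(\frac{d+n-2}{d+n-3}\right)\dotsm\left(\frac{d+n-2}{d+1}\right)
  <
  \left(\frac{n-1}{n}\right)\left(\frac{n-1}{n-1}\right)\left(\frac{n-1}{n-2}\right)\dotsm\left(\frac{n-1}{2}\right)
\]
``term-by-term,'' the very first comparison fails: cross-multiplying shows $\frac{d+n-2}{d+n-1} > \frac{n-1}{n}$ whenever $d>1$. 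The factors with denominators $d+a$, $2\leq a < n-1$, do compare termwise against $\frac{n-1}{a}$ (add $d-1$ to numerator and denominator of a fraction greater than one), but the first factor goes the wrong way, and the paper has to rescue it by pairing it with the factor $\frac{d+n-2}{d+n-3}$ and proving the combined two-factor inequality $\frac{(c+m)^2}{(c+m+1)(c+m-1)} < \frac{m^2}{(m+1)(m-1)}$ with $c=d-1$, $m=n-1$ --- a step that, incidentally, is exactly where the hypothesis $n\geq 4$ enters (for $n=3$ the rescuing factor is absent and the inequality is in fact false). Your alternative suggestion of induction on $q$ is not developed at all, and working with the exact expression $(q+1)^{n-1-s}(q+2)^s$ rather than the AM--GM majorant makes the bookkeeping strictly harder, not easier.

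A smaller point: your remark that the exceptional cases are the ``would-be extremal cases'' where the algebraic inequality is tightest is not accurate --- one can check that $n\cdot\rmax(n,d) < \binom{d+n-1}{n-1}$ holds comfortably at $(4,4)$, $(5,3)$, $(5,4)$ and $(n,2)$ ($8 < 35/4$ fails, but $8 < 8.75$ holds, etc.); these cases must be treated separately only because the formula \eqref{eq: generic rank} for $\rgen$ does not apply there, not because the inequality degenerates. This does not affect correctness since you verify them directly anyway, but the genuine tight spot is the first factor of the product comparison above, for all $d$, and that is the gap you need to close.
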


\noindent
That is, the only monomials with greater than generic rank are in three or fewer variables.

We do not assume that $M$ actually involves every variable.

\begin{proof}
We assume $n \geq 4$, so we ignore the exceptional case $(n,d) = (3,4)$.
First we dispose of the remaining exceptional cases.
If $d=2$ then $M = x_1^2$ or $x_1 x_2$, so $r(M) = 1$ or $2$, while $\rgen(n,2) = n \geq 4 > r(M)$.
The cases $(n,d) = (4,4), (5,3)$ are listed in Table~\ref{table: exceptional cases}.
For $(n,d) = (5,4)$, the same monomials appear as in the case $(n,d)=(4,4)$, and $\rgen(5,4) = 15$.
This takes care of all the exceptional cases.

\begin{table}[hb]
\begin{tabular}{lllll}
\toprule
$n$ & $d$ & $M$ & $r(M)$ & generic rank \\
\midrule
$4$ & $4$ & $x_1 x_2 x_3 x_4$ & $8$ & $10$ \\
  &  & $x_1 x_2 x_3^2$ & $6$ & $10$ \\
  &  & $x_1 x_2^3$ & $4$ & $10$ \\
  &  & $x_1^2 x_2^2$ & $3$ & $10$ \\
  &  & $x_1^4$ & $1$ & $10$ \\
$5$ & $3$ & $x_1 x_2 x_3$ & $4$ & $8$ \\
  &  & $x_1 x_2^2$ & $3$ & $8$ \\
  &  & $x_1^3$ & $1$ & $8$ \\
\bottomrule
\end{tabular}
\caption{Exceptional cases $(4,4)$ and $(5,3)$.}\label{table: exceptional cases}
\end{table}

Now we consider the nonexceptional case.
Say $k \leq n$ of the variables appear in $M = x_1^{a_1} \dotsm x_k^{a_k}$, $0 < a_1 \leq \dotsb \leq a_k$.
Write $M = x_1^{a_1} \dotsm x_n^{a_n}$, $a_{k+1} = \dotsb = a_n = 0$.
By the arithmetic-geometric mean inequality,
\begin{multline*}
  r(M) = (a_2+1)\dotsm(a_n+1)
    \leq \left( \frac{a_2+\dotsb+a_n+n-1}{n-1} \right)^{n-1} \\
    = \left( \frac{d+n-1-a_1}{n-1} \right)^{n-1}
    \leq \left( \frac{d+n-2}{n-1} \right)^{n-1} .
\end{multline*}
We claim
\[
  \left( \frac{d+n-2}{n-1} \right)^{n-1} < \frac{1}{n} \binom{d+n-1}{n-1}
\]
or equivalently
\begin{equation}\label{eq: inequality}
  \left(\frac{d+n-2}{d+n-1}\right)
      \dotsm \left(\frac{d+n-2}{d+1}\right)
  <
  \frac{(n-1)^{n-1}}{n!} \\
  =
  \left(\frac{n-1}{n}\right)
      \dotsm \left(\frac{n-1}{2}\right) .
\end{equation}
First, if $m, c > 0$ then $(c+m)^2(m^2-1) < m^2((c+m)^2-1)$,
so
\[
  \frac{(c+m)^2}{(c+m+1)(c+m-1)} < \frac{m^2}{(m+1)(m-1)} .
\]
Substituting $c=d-1$ and $m=n-1$,
\[
  \left(\frac{d+n-2}{d+n-1}\right) \left(\frac{d+n-2}{d+n-3}\right)
    <
  \left(\frac{n-1}{n}\right) \left(\frac{n-1}{n-2}\right) .
\]
This takes care of the first three factors on each side in \eqref{eq: inequality}.
(Here we use the hypothesis $n\geq 4$; otherwise $\frac{d+n-2}{d+n-3}$ and $\frac{n-1}{n-2}$ are absent.)
For the remaining factors,
\[
  \frac{(d-1)+(n-1)}{(d-1)+a} < \frac{n-1}{a}
\]
for $2 \leq a < n-1$.
This proves \eqref{eq: inequality} and completes the proof.
\end{proof}

\section{Sums of pairwise coprime monomials}

If $M_1,\dotsc,M_t$ are pairwise coprime monomials, that is, involving pairwise disjoint sets of variables,
then $r(M_1 + \dotsb + M_t) = \sum r(M_i)$ \cite{Carlini20125}.

\begin{example}
The form $F = x_1 x_2^2 + x_3 x_4^2$, with $n=4$, $d=3$, has higher than generic rank:
\[
  r(x_1 x_2^2 + x_3 x_4^2) = r(x_1 x_2^2) + r(x_3 x_4^2) = 6 > \rgen(4,3) = 5 .
\]
The forms $x_1 x_2 x_3 + x_4^3$ and $x_1 x_2^2 + x_3^3 + x_4^3$ each have rank $5$,
equal to the generic rank.
\end{example}

In fact these are the only examples with greater than or equal to generic rank.

\begin{theorem}\label{thm: no high rank sums of coprime}
Every sum of pairwise coprime monomials in $n \geq 4$ variables, of degree $d \geq 3$,
has rank strictly less than the generic rank, with the following list of exceptions
all occurring in $(n,d) = (4,3)$:
$x_1 x_2^2 + x_3 x_4^2$ has rank $6$, strictly greater than $\rgen(4,3) = 5$;
$x_1 x_2 x_3 + x_4^3$ and $x_1 x_2^2 + x_3^3 + x_4^3$ have rank $5$, equal to the generic rank.
\end{theorem}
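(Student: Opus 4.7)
The plan is to combine the Carlini--Catalisano--Geramita additivity $r(F) = \sum_i r(M_i)$ for pairwise coprime $M_i$ with a careful bound on each $r(M_i)$, then compare the total with $\rgen(n,d)$ via a generic inequality for most $(n,d)$ and a finite case analysis for the rest.

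Write $\rmax^{\mathrm{mon}}(k,d)$ for the maximum Waring rank of a degree-$d$ monomial in $k$ variables, computed explicitly in \cite{Carlini20125}. Let $k_i \geq 1$ denote the number of variables appearing with positive exponent in $M_i$, so $\sum_i k_i \leq n$ and $r(M_i) \leq \rmax^{\mathrm{mon}}(k_i,d)$. Direct inspection of the CCG formula shows that the rate $\rmax^{\mathrm{mon}}(k,d)/k$, as $k$ ranges over positive integers, is maximized at $k = 2$ with value $d/2$ when $d = 3$ and at $k = d$ with value $2^{d-1}/d$ when $d \geq 4$; let $C_d$ denote this maximum. Summing over $i$ yields the clean bound $r(F) \leq C_d \cdot n$. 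Comparing this with $\rgen(n,d) = \lceil \binom{d+n-1}{n-1}/n \rceil$: for $d = 3$, the inequality $3n/2 < (n+1)(n+2)/6$ rearranges to $n^2 - 6n + 2 > 0$ and holds for $n \geq 6$; for $d = 4$, $2n < (n+1)(n+2)(n+3)/24$ holds for all $n \geq 4$; for $d \geq 5$ a similar polynomial-versus-linear comparison holds beyond a small threshold on $n$. The exceptional-$\rgen$ cases $(4,4),(5,3),(5,4)$ are easier since the exceptional value of $\rgen$ exceeds the formula value.

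This reduces the problem to a finite list of small $(n,d)$, which I would verify by hand using the explicit CCG formula. In particular, for $(n,d) = (4,3)$, enumerating the partitions $4$, $3{+}1$, $2{+}2$, $2{+}1{+}1$, $1{+}1{+}1{+}1$ of $4$ yields maximum sum-ranks $4, 5, 6, 5, 4$ respectively; since $\rgen(4,3) = 5$, the three partitions $3{+}1$, $2{+}2$, $2{+}1{+}1$ produce exactly the three exceptional forms listed in the theorem. The principal obstacle is that the rate bound $C_d n$ is not always strict enough in small cases (for instance at $(n,d) = (4,6)$ one has $C_d n > \rgen$); in such borderline cases one must replace the rate bound by the sharper partition-specific bound $\sum_i \rmax^{\mathrm{mon}}(k_i,d)$ evaluated from the CCG formula, which --- because $n$ need not be a multiple of the rate-maximizing value of $k$ --- provides the additional slack needed to conclude.
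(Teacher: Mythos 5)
Your overall architecture --- CCG additivity, a per-variable ``rate'' bound $r(M_i)/k_i \leq C_d$, comparison of $C_d\cdot n$ with $\rgen(n,d)$, and a finite check of small cases --- is essentially the paper's. But there is a genuine gap in the regime $d > n$, and it is not confined to a finite list. Your constant $C_d = \max_k \rmax(k,d)/k = 2^{d-1}/d$ (for $d \geq 4$) is attained only by the squarefree monomial $x_1 \dotsm x_d$, which needs $d$ variables; when $n < d$ no summand can realize that rate, yet the clean bound $r(F) \leq C_d n$ still charges every variable at it. The resulting comparison $\tfrac{2^{d-1}}{d} n < \rgen(n,d)$ then fails systematically: for any fixed $n \geq 4$ the left side grows exponentially in $d$ while $\rgen(n,d) = O(d^{n-1})$, so the inequality fails for all sufficiently large $d$ --- already at $(4,6)$ as you note, but also at $(4,d)$ for every $d \geq 6$, at $(5,d)$ for $d \geq 10$, and so on. Hence the problem is \emph{not} reduced to ``a finite list of small $(n,d)$,'' and the proposed patch of hand-verifying partition-specific bounds cannot cover an infinite family without a structural argument.

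The missing structural ingredient is exactly Lemma~\ref{lemma: slope inequality}: $k \mapsto \rmax(k,d)/k$ is nondecreasing for $2 \leq k \leq d$. Granting that, when $d \geq n \geq 4$ each $k_i \leq n \leq d$ gives $r(M_i)/k_i \leq \rmax(n,d)/n$, hence $r(F) \leq \rmax(n,d)$, and Theorem~\ref{thm: monomial} finishes the case with no reference to $2^{d-1}/d$ at all. The constant $2^{d-1}/d$ is the correct rate only when $n > d$ (there every $k_i \leq d < n$), and in that regime the needed comparison $\tfrac{2^{d-1}}{d} < \tfrac{1}{n^2}\binom{d+n-1}{n-1}$ does hold for all $n > d \geq 4$; this is inequality~\eqref{eq: gen rank vs monomial n > d} of the paper and requires its own short induction. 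Two smaller points: your claim that the rate is maximized at $k=d$ for $d \geq 4$ is true but is precisely the content of the slope lemma and deserves a proof rather than ``direct inspection''; and in the $(4,3)$ enumeration the partition $4$ contributes nothing, since there is no degree-$3$ monomial in exactly four variables (harmless, as the remaining partitions give exactly the three listed exceptions). Your $d=3$ analysis is otherwise fine and matches the paper's.
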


Let $\rmax(n,d)$ be the maximum rank of a monomial of degree $d$ in $n$ variables
and let $\rmax^*(n,d)$ be the maximum rank of a sum of pairwise coprime monomials of degree $d$ in $n$ variables.

\begin{lemma}\label{lemma: slope inequality}
If $d \geq 4$ and $d \geq n \geq 2$,
then $\frac{1}{n} \rmax(n,d) \geq \frac{1}{n-1} \rmax(n-1,d)$.
\end{lemma}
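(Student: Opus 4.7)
The plan is to construct, from a maximum-rank monomial in $n-1$ variables of degree $d$, a monomial in $n$ variables of the same degree whose rank is at least $\frac{n}{n-1}\rmax(n-1,d)$. That immediately yields $\rmax(n,d) \geq \frac{n}{n-1}\rmax(n-1,d)$, which is equivalent to the claim.

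First I would dispose of $n=2$: here $\rmax(1,d) = 1$ and $\rmax(2,d)=d$, so the inequality reads $d/2 \geq 1$, valid since $d \geq 4$. For $n \geq 3$, recall from the introduction that a maximum-rank monomial in $n-1$ variables of degree $d$ can be taken to be $M_{n-1} = x_1 \cdot x_2^{q'} \dotsm x_{n-1-s'}^{q'} \cdot x_{n-s'}^{q'+1} \dotsm x_{n-1}^{q'+1}$, where $d-1 = q'(n-2) + s'$ with $0 \leq s' \leq n-3$. Let $E$ denote the largest exponent appearing in $M_{n-1}$, so $E = q'+1$ when $s' \geq 1$ and $E = q'$ when $s' = 0$. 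The hypothesis $d \geq n$ forces $E \geq 2$: otherwise one would have $s'=0$ and $q'=1$, giving $d=n-1<n$, a contradiction.

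Next I would construct $M'$ in $n$ variables of degree $d$ by selecting a variable of $M_{n-1}$ whose exponent equals $E$, decreasing that exponent by $1$, and adjoining a new variable $x_n$ with exponent $1$. Then $M'$ has two exponents equal to $1$ (from $x_1$ and $x_n$), one occurrence of $E-1$ in place of an $E$, and otherwise the same multiset of exponents as $M_{n-1}$. A direct application of the Carlini--Catalisano--Geramita rank formula to $M'$ in both subcases $s'=0$ and $s' \geq 1$ yields
\[
  \frac{r(M')}{\rmax(n-1,d)} = \frac{2E}{E+1}.
\]

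Finally, the desired inequality reduces to $\frac{2E}{E+1} \geq \frac{n}{n-1}$, equivalently $E(n-2) \geq n$, i.e.\ $E \geq n/(n-2)$. For $n \geq 4$ the right-hand side is at most $2$, so the bound $E \geq 2$ above suffices. For $n = 3$ we have $s' = 0$ automatically and $E = q' = d-1$, so the required bound $E \geq 3$ is exactly the hypothesis $d \geq 4$. The main obstacle is verifying that the rank computation for $M'$ really produces the clean ratio $2E/(E+1)$ in both subcases (the exponent being decreased is $q'+1$ when $s' \geq 1$ and $q'$ when $s' = 0$); once that is done, the arithmetic bound $E \geq n/(n-2)$ follows immediately from the hypotheses, with equality occurring in the boundary cases $(n,d) = (3,4)$ and $(4,4)$.
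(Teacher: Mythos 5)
Your proposal is correct and follows essentially the same route as the paper: both construct $M'$ from a maximum-rank monomial in $n-1$ variables by decrementing its largest exponent and adjoining a new variable with exponent $1$, which yields the rank ratio $\frac{2E}{E+1}$ with $E = \lceil (d-1)/(n-2)\rceil$. The only divergence is in the closing arithmetic: you reduce $\frac{2E}{E+1} \geq \frac{n}{n-1}$ to $E \geq \frac{n}{n-2}$ and verify this directly from $E \geq 2$ (and $E \geq 3$ when $n=3$), handling $d=n$ and $d>n$ uniformly, whereas the paper passes through the bound $\frac{2(d-1)}{d+n-3} \geq \frac{n}{n-1}$ and treats $d=n\geq 4$ as a separate case.
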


\begin{proof}
First, if $n=2$, $\rmax(2,d) = d$ while $\rmax(1,d) = 1$, so the claim is true.
Second, suppose $d > n > 2$.
Let $M$ be a monomial in $n-1$ variables of degree $d$ with rank $\rmax(n-1,d)$.
Up to reordering the variables, $M = x_1^{1} x_2^{a_2} \dotsm x_{n-1}^{a_{n-1}}$
with $a_2 \leq \dotsb \leq a_{n-1} \leq a_2 + 1$.
We have
\[
  a_{n-1} = \left\lceil \frac{d-1}{n-2} \right\rceil   > 1 .
\]
Let $M' = x_1^{1} x_2^{a_2} \dotsm x_{n-1}^{a_{n-1} - 1} x_n^{1}$, so $M'$ still has degree $d$,
and
\begin{equation}\label{eq: rmax slope increasing}
  \rmax(n,d) \geq r(M') = r(M) \frac{2 a_{n-1}}{a_{n-1} + 1}
    = \rmax(n-1,d) \frac{ 2 \lceil\frac{d-1}{n-2}\rceil }{ \lceil\frac{d-1}{n-2}\rceil + 1}
    \geq \rmax(n-1,d) \frac{2(d-1)}{d+n-3} .
\end{equation}
Since $d \geq n+1$ we have
\[
  d \geq \frac{n^2-n-2}{n-2},
\]
which after some rearrangement becomes
\[
  \frac{2(d-1)}{d+n-3} \geq \frac{n}{n-1} .
\]
Combining with \eqref{eq: rmax slope increasing}, this proves the claim for the case $d > n > 2$.

Finally, if $d = n \geq 4$ then $\rmax(n,d) = 2^{d-1}$ and $\rmax(n-1,d) = 3 \cdot 2^{d-3} = \frac{3}{4} \rmax(n,d)$.
Since $n \geq 4$, $\frac{n-1}{n} \geq \frac{3}{4}$, so $\frac{1}{n} \rmax(n,d) \geq \frac{1}{n-1} \rmax(n-1,d)$.
\end{proof}

\begin{proof}[Proof of Theorem~\ref{thm: no high rank sums of coprime}]
First suppose $d \geq n \geq 4$.
Let $F = M_1 + \dotsb + M_s$ be a sum of pairwise coprime monomials of degree $d$,
where $M_i$ involves exactly $n_i$ variables,
$n = \sum n_i$, $n_1 \geq \dotsb \geq n_s \geq 1$.
We use the elementary inequality that if $a_i, b_i > 0$ and $\frac{a_i}{b_i} \leq x$ for all $i$,
then $(\sum a_i)/(\sum b_i) \leq x$.
For each $i$,
\[
  \frac{r(M_i)}{n_i} \leq \frac{\rmax(n_i,d)}{n_i} \leq \frac{\rmax(n,d)}{n}
\]
by Lemma~\ref{lemma: slope inequality}. Therefore
\[
  \frac{r(F)}{n} = \frac{\sum r(M_i)}{\sum n_i} \leq \frac{\rmax(n,d)}{n} ,
\]
hence $r(F) \leq \rmax(n,d)$.
By Theorem~\ref{thm: monomial}, $\rmax(n,d) < \rgen(n,d)$ for $n \geq 4$.

Next we take care of the case $d = 3$, $n \geq 5$.
Let $F$ be a sum of pairwise coprime monomials of degree $3$ with rank $\rmax^*(n,3)$.
The only monomials that can appear are of the form $x^3$, $x y^2$, $xyz$,
with ranks $1$, $3$, $4$ respectively.
We can replace each occurence in $F$ of $xyz$ with $xy^2 + z^3$
without changing the rank or number of variables.
So we can assume every term in $F$ is of the form $x^3$ or $x y^2$.
This shows that if $n$ is even, $\rmax^*(n,3) = 3n/2$, and if $n$ is odd, $\rmax^*(n,3) = (3n-1)/2$.
On the other hand,
\[
  \rgen(n,3) \geq \frac{1}{n} \binom{n+2}{3} = \frac{(n+2)(n+1)}{6} = \frac{3n}{2} + \frac{n(n-6)+2}{6} .
\]
When $n \geq 6$, $n(n-6)+2 \geq 2$, which shows $\rgen(n,3) > \frac{3n}{2} \geq \rmax^*(n,3)$.
When $n = 5$, $\rgen(5,3) = 8$ (by the Alexander--Hirschowitz theorem) while $\rmax^*(5,3) = 7$.

Now we deal with the case $n > d \geq 4$.
We will use that
\begin{equation}\label{eq: gen rank vs monomial n > d}
  \frac{1}{n^2} \binom{d+n-1}{n-1} > \frac{2^{d-1}}{d} .
\end{equation}
To prove this, first, for $n = d \geq 4$ we have
\[
  \frac{1}{d^2} \binom{2d-1}{d-1}
    = \frac{1}{d} \left(\frac{2d-1}{d}\right) \left(\frac{2d-2}{d-1}\right) \dotsm \left(\frac{d+1}{2}\right) .
\]
Note that
\[
  \left(\frac{2d-1}{d}\right) \left(\frac{2d-3}{d-2}\right) = \frac{4d^2-8d+3}{d^2-2d} > 4,
\]
while the $d-3$ remaining factors are $\geq 2$.
So we have $\frac{1}{d^2} \binom{2d-1}{d-1} > \frac{2^{d-1}}{d}$.

Next, for $n \geq d \geq 4$ we have
\[
\begin{split}
  \frac{1}{n^2} \binom{d+n-1}{n-1} &= \frac{(n+1)^2}{n^2} \frac{n}{n+d} \cdot \frac{1}{(n+1)^2} \binom{d+n}{n} \\
    &= \frac{(n+1)^2}{n(n+d)} \cdot \frac{1}{(n+1)^2} \binom{d+n}{n} \\
    & < \frac{1}{(n+1)^2} \binom{d+n}{n}
\end{split}
\]
since $d > 2$.
This completes the proof of \eqref{eq: gen rank vs monomial n > d}.

Now let $F = M_1 + \dotsb + M_s$ be a sum of pairwise coprime monomials of degree $d$,
where $M_i$ involves exactly $n_i$ variables,
$n = \sum n_i$, $n_1 \geq \dotsb \geq n_s \geq 1$.
Note each $n_i \leq d$.
We have
\[
  \frac{r(M_i)}{n_i} \leq \frac{\rmax(d,d)}{d} = \frac{2^{d-1}}{d} .
\]
Therefore
\begin{equation}\label{eq: linear bound for rmax*}
  \frac{r(F)}{n} = \frac{\sum r(M_i)}{\sum n_i} \leq \frac{2^{d-1}}{d} < \frac{1}{n^2} \binom{d+n-1}{n-1}
\end{equation}
which gives us
\[
  r(F) < \frac{1}{n} \binom{d+n-1}{n-1} \leq \rgen(n,d)
\]
as desired.
This completes the case $n > d \geq 4$.

Finally, we consider the case $(n,d) = (4,3)$.
Up to reordering terms and variables,
the sums of pairwise coprime monomials that use all the variables are listed in the following table.
\[
\begin{array}{ll}
F & r(F) \\
\toprule
x_1^3 + x_2^3 + x_3^3 + x_4^3 & 4 \\
x_1 x_2^2 + x_3^3 + x_4^3 & 5 \\
x_1 x_2 x_3 + x_4^3 & 5 \\
x_1 x_2^2 + x_3 x_4^2 & 6
\end{array}
\]
Since $\rgen(4,3) = 5$, this shows that the exceptions listed in the statement of the theorem are the only ones.
\end{proof}

We have seen that if $d \geq n$ then $\rmax^*(n,d)$ is attained by a monomial.
What if $n > d$?
The greatest rank monomial of degree $d$ is a product of $d$ variables.
So a ``greedy'' way to construct a high-rank sum of pairwise coprime monomials
is to add up products of $d$ variables,
with any remaining variables placed into one more monomial.
But this does not necessarily maximize Waring rank,
as we have seen for $(n,d)=(4,3)$: the greedy choice $x_1 x_2 x_3 + x_4^3$ has rank $5$,
while the non-greedy choice $x_1 x_2^2 + x_3 x_4^2$ has rank $6$.
Similarly, for $(n,d)=(5,4)$, the greedy choice $x_1 x_2 x_3 x_4 + x_5^4$ has rank $9$,
while the non-greedy choice $x_1 x_2 x_3^2 + x_4 x_5^3$ has rank $10$;
and for $(n,d)=(6,5)$, the greedy choice $x_1 \dotsm x_5 + x_6^5$ has rank $17$,
while the non-greedy choice $x_1 x_2^2 x_3^2 + x_4 x_5^2 x_6^2$ has rank $18$.

\section{Asymptotic comparison with generic rank}\label{section: asymptotic}

It was noted in \cite{Carlini20125} that, for fixed $n$ and $d$ going to infinity,
$\rmax(n,d)$ is asymptotically $d^{n-1}/(n-1)^{n-1}$, while $\rgen(n,d)$ is asymptotically $d^{n-1}/n!$.
So $\rmax(3,d)/\rgen(3,d) \to 3/2$, while for $n > 3$,
\[
  \frac{\rmax(n,d)}{\rgen(n,d)} \to \frac{n!}{(n-1)^{n-1}} < 1
  \qquad
  \text{as $d \to \infty$}.
\]
If $d$ is fixed and $n \to \infty$, then, for $n \geq d$, $\rmax(n,d) = \rmax(d,d) = 2^{d-1}$,
since the highest rank monomial is $x_1 \dotsm x_d$, and the extra variables cannot be used.

Similarly, if $n \geq 4$ is fixed and $d \to \infty$, then, for $d \geq n$, $\rmax^*(n,d) = \rmax(n,d)$,
and once again,
\[
  \frac{\rmax^*(n,d)}{\rgen(n,d)} \to \frac{n!}{(n-1)^{n-1}} < 1
  \qquad
  \text{as $d \to \infty$}.
\]

Finally, fix $d$ to find the limit of the ratio as $n \to \infty$.
In the proof of Theorem~\ref{thm: no high rank sums of coprime} we found that,
for a fixed $d \geq 3$,  $\rmax^*(n,d)$ is bounded by a linear function for large enough $n$:
$\rmax^*(n,d) \leq \tfrac{3n}{2}$ when $d=3$ and $\rmax^*(n,d) \leq \tfrac{n2^{d-1}}{d}$ when $n > d \geq 4$,
by~\eqref{eq: linear bound for rmax*}.
However for $d \geq 3$, $\rgen(n,d) = O(n^{d-1})$ grows faster than a linear function as a function of $n$, so
\[
  \frac{\rmax^*(n,d)}{\rgen(n,d)} \to 0
  \qquad
  \text{as $n \to \infty$}.
\]

\section{Non-monomial examples}

We close with examples
of forms which are not monomials or sums of pairwise coprime monomials,
with higher than generic rank.

First, $F = x^2 y + y^2 z$ is a form in $n=3$ variables of degree $d=3$,
with $r(F) = 5 > \rgen(3,3) = 4$.
See for example \cite[\textsection8]{Landsberg:2009yq}, \cite[Theorem 2.3]{Kleppe:1999fk}.
Second, $r(x^2 y^2 + y^3 z) = 7 > \rgen(3,4) = 6$, see \cite[Proposition 3.1]{Kleppe:1999fk}.

To these we can add one more non-monomial example:
\begin{proposition}
Let $F = x^2 y + y^2 z$ be the plane cubic of rank $5$.
Let $G = F + w^3$.
Then $r(G) = 6 > \rgen(4,3) = 5$.
\end{proposition}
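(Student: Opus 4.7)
The upper bound $r(G) \leq r(F) + r(w^3) = 6$ is immediate. For the matching lower bound I would proceed by contradiction: suppose $r(G) \leq 5$ and write $G = \sum_{i=1}^{5} c_i \ell_i^3$ with $\ell_i = L_i + \alpha_i w$ and $L_i \in \mathbb{C}[x,y,z]_1$. Expanding each cube by the binomial theorem and matching coefficients of $w^k$ yields
\[
F = \sum c_i L_i^3, \qquad \sum c_i \alpha_i L_i^2 = 0, \qquad \sum c_i \alpha_i^2 L_i = 0, \qquad \sum c_i \alpha_i^3 = 1.
\]
If some $L_i$ vanishes, then $c_i \ell_i^3$ is a pure multiple of $w^3$ and can be absorbed into the $w^3$ term, leaving either a rank-$\leq 4$ decomposition of $F$ (contradicting $r(F) = 5$) or, after rescaling $w$, a rank-$\leq 4$ decomposition of $G$ itself. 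So all $L_i \neq 0$ and $F = \sum c_i L_i^3$ is a genuine rank-$5$ decomposition of $F$.

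The heart of the argument is to show that the five squares $L_1^2, \ldots, L_5^2$ are linearly independent in the space of quadratic forms in $x,y,z$. By the standard apolarity pairing, such a dependence is equivalent to the five points $[L_i] \in \mathbb{P}^2$ failing to impose independent conditions on conics, which occurs precisely when at least four of the $[L_i]$ are collinear. I would rule out this configuration by a case split on the $2$-dimensional subspace $U \subset \mathbb{C}^3$ of linear forms containing the collinear $L_i$'s. If all five $L_i$'s lie in $U$, then $F$ would be a cubic in only two linear forms; but a direct check shows $\partial_v F \neq 0$ for every nonzero direction $v$, so $F$ does not lie in any such subring. If exactly four $L_i$'s lie in $U$ with $L_5 \notin U$, then taking $v$ to span the one-dimensional annihilator of $U$ gives $\partial_v F = 3 c_5 (v \cdot L_5) L_5^2$, forcing $\partial_v F$ to be a nonzero rank-$1$ quadratic form. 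A short calculation with $\partial_v F = \beta x^2 + 2\alpha xy + \gamma y^2 + 2\beta yz$ for $v = \alpha \partial_x + \beta \partial_y + \gamma \partial_z$ shows that its symmetric $3 \times 3$ matrix has rank one only when $v$ is proportional to $\partial_z$; but then $L_5$ must be proportional to $y$, which lies in the corresponding $U = \mathrm{span}(x,y)$, a contradiction.

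Once the $L_i^2$ are known to be linearly independent, the relation $\sum c_i \alpha_i L_i^2 = 0$ forces $c_i \alpha_i = 0$ for every $i$, so all $\alpha_i = 0$; this contradicts $\sum c_i \alpha_i^3 = 1$. The main obstacle is the linear-independence claim for the $L_i^2$: it uses the specific form of $F = x^2 y + y^2 z$ in an essential way, since the argument rests on the fact that the first-order partial derivatives of $F$ in a given direction are almost never of rank one as quadratic forms, the only rank-one direction being $\partial_z$, which is then excluded by the location of $L_5$.
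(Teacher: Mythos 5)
Your argument is correct, but it takes a genuinely different route from the paper's. The paper disposes of this proposition by citing the result of Carlini, Catalisano, and Chiantini that $r(F(x_1,\dotsc,x_n) + y_1^d + \dotsb + y_s^d) = r(F) + s$ for independent variables, which gives $r(G) = r(F) + 1 = 6$ in one line. You instead prove the special case you need from scratch: expand a putative $5$-term decomposition of $G$ in powers of $w$, reduce to a genuine rank-$5$ decomposition $F = \sum c_i L_i^3$ in $x,y,z$, show the squares $L_i^2$ are linearly independent, and conclude from the $w^1$-coefficient equation that all $\alpha_i = 0$, contradicting $\sum c_i\alpha_i^3 = 1$. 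The geometric inputs check out: five distinct points of $\mathbb{P}^2$ fail to impose independent conditions on conics exactly when at least four are collinear, and the determinant of the Hessian-type matrix of $\partial_v F$ is $-\beta^3$, so the only rank-one directional derivative of $F$ is $\partial_z F = y^2$, which is then excluded because $y$ lies in $U = \operatorname{span}(x,y)$. Two points deserve to be made explicit in a final write-up: (i) the phrase ``genuine rank-$5$ decomposition'' is doing real work, since it is what forces the $L_i$ to be pairwise non-proportional (otherwise $r(F) \leq 4$) and hence the points $[L_i]$ to be distinct, as the conics argument requires; and (ii) the branch of the $L_i = 0$ case that ends in ``a rank-$\leq 4$ decomposition of $G$ itself'' closes most cleanly if you start from a \emph{minimal} decomposition of $G$, so that losing a term is already a contradiction. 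As for what each approach buys: the paper's citation is immediate and generalizes to any $F$ and any number of added powers of new variables, whereas your proof is elementary and self-contained but leans on specific features of $F = x^2y + y^2z$ (its conciseness and the classification of its rank-one partials), so it would need to be redone for other base forms.
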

We thank Jarek Buczy\'nski for suggesting this example and a proof that involved tensor rank.
Since then, however, we have learned of a much quicker, elementary proof
using a very recent result of Carlini, Catalisano, and Chiantini \cite{Carlini:2014rr}.
They showed that $r(F(x_1,\dotsc,x_n) + y_1^d + \dotsb + y_s^d) = r(F) + s$,
when the $x_i$ and $y_j$ are independent variables.
This immediately implies the proposition.

They also showed that $r(F(x_1,x_2) + G(y_1,y_2)) = r(F) + r(G)$.
It is expected that this should hold for forms in any number of variables.
Unfortunately, the result for two forms in two variables does not give any new examples of forms with higher than generic rank:
$r(F+G) = r(F) + r(G)$ is maximized when $F = x_1 x_2^{d-1}$, $G = y_1 y_2^{d-1}$,
a sum of pairwise coprime monomials, already considered above.

In conclusion, it is surprisingly nontrivial not only to find forms with strictly greater than generic rank,
but even just to find forms with Waring rank equal or close to generic rank.

\section*{About the authors}
The first three authors are undergraduate students at Boise State University.
The fourth author, a faculty member at Boise State University, proved Theorem~\ref{thm: monomial}
and conjectured the statement of Theorem~\ref{thm: no high rank sums of coprime}.
The proof of that conjecture and the limits in Section~\ref{section: asymptotic}
were found by the first three authors.

\bibliographystyle{amsalpha}
\renewcommand{\MR}[1]{{}}
\bibliography{../../biblio}

\end{document}